\newtheorem{Theorem}{\quad Theorem}
\newtheorem{Remark}{\quad Remark}
\newtheorem{Proposition}{\quad Proposition}
\newtheorem{Corollary}[Theorem]{\quad Corollary}
\date{}
\begin{document}


\centerline{}

\centerline {\Large{\bf On the abscissas of convergence of Dirichlet series}}
\centerline {\Large{\bf with random pairwise independent exponents}}

\centerline{}

\centerline{\bf {A. O. Kuryliak}}

\centerline{}

\centerline{Department of Mechanics and Mathematics,}

\centerline{Ivan Franko National University of L'viv, Ukraine}

\centerline{kurylyak88@gmail.com}

\centerline{}

\centerline{\bf {O. B. Skaskiv}}

\centerline{}

\centerline{Department of Mechanics and Mathematics,}

\centerline{Ivan Franko National University of L'viv, Ukraine}

\centerline{olskask@gmail.com}

\centerline{}

\centerline{}

\centerline{\bf {N. Yu. Stasiv}}

\centerline{}

\centerline{Department of Mechanics and Mathematics,}

\centerline{Ivan Franko National University of L'viv, Ukraine}

\centerline{n-stas@ukr.net}

\begin{abstract}
For the Dirichlet series of the form $\displaystyle
F(z,\omega)=\sum\nolimits_{k=0}^{+\infty} f_k(\omega)e^{z\lambda_k(\omega)} $ $ (z\in\mathbb{C},$\ $\omega\in\Omega)$ with pairwise independent real exponents $(\lambda_k(\omega))$ on probability space $(\Omega,\mathcal{A},P)$ an estimates of abscissas convergence and absolutely convergence are established.
\end{abstract}

{\bf Subject Classification:} 30B20, 30D20 \\

{\bf Keywords:} random Dirichlet series, random exponents, abscissas of con\-vergence.

\noindent {\bf 1. Introduction.}    Let $(\Omega,\mathcal{A},P)$ {be a}  probability space,
$\mathbf{\Lambda}=\big(\lambda_k(\omega)\big)_{k=0}^{+\infty}$ and $\mathbf{f}=\big(f_k(\omega)\big)_{k=0}^{+\infty}$ sequences of positive and complex-valued random  variables on it, respectively. 
Let
$\mathcal{D}$ be the class of formal random Dirichlet series of the form

\smallskip\centerline{$\displaystyle
f(z)=f(z,\omega)=\sum\limits_{k=0}^{+\infty} f_k(\omega)e^{z\lambda_k(\omega)} \quad$}

\smallskip\noindent $(z\in\mathbb{C},\ \omega\in\Omega).$ Let $\sigma_{\text{conv}}(f,\omega)$ and $\sigma_a(f,\omega)$ be the abscissa of convergence and absolute convergence of this series for fixed $\omega\in\Omega,$  respectively. The simple modification of \cite{Mand}--\cite{ZadSka} one has that for the Dirichlet series $f\in\mathcal{D}$ for fixed $\omega\in\Omega$ such that $\lambda_k(\omega)\to +\infty\ (k\to +\infty)$
\begin{gather}\label{eq0}
\sigma_a(f,\omega)\leq\sigma_{\text{conv}}(f,\omega)\leq \alpha_0(\omega):=\varliminf\limits_{k\to
+\infty}\frac{-\ln|f_k(\omega)|}{\lambda_k(\omega)}
\leq\sigma_a(f,\omega)+\tau(\omega,\Lambda),
\end{gather}
or in the case $-\ln |f_k(\omega)|\to +\infty\ (k\to +\infty)$
\begin{gather}\label{eq01'}
(1-h)\sigma_{\text{conv}}(f,\omega)\leq (1-h)\alpha_0(\omega)
\leq\sigma_a(f,\omega),\quad h=h(\omega,\mathbf{f})
\end{gather}
where

\centerline{$\displaystyle
\tau(\omega,\Lambda):=\varlimsup_{k\to\infty}\frac{\ln k}{\lambda_k(\omega)},\quad h(\omega,\mathbf{f}):=\varlimsup_{k\to\infty}\frac{\ln k}{-\ln |f_k(\omega)|}.$}

\smallskip\noindent Also,
\begin{equation}\label{eq01}
\sigma_{\text{conv}}(f,\omega)=\sigma_a(f,\omega)=\alpha_0(\omega)
\end{equation}
for fixed $\omega\in\Omega$ such that $\tau(\omega)=0$ or
$\ln k/(-\ln |f_k(\omega)|)\to +0$\ $(k\to +\infty)$. Denote

\smallskip\centerline{$\sigma(f,\omega):=\sigma_a(f,\omega).$}

\smallskip\noindent
Remark, that from condition $\tau(\omega)<+\infty$ we get $\lambda_k(\omega)\to +\infty\ (k\to +\infty)$. From condition
$\sigma_{\text{conv}}(f,\omega)>0$ follows, that $-\ln |f_k(\omega)|\to +\infty\ (k\to +\infty)$, because in this case of the series of the form $f(0)=\sum_{k=0}^{+\infty}f_k(\omega)$ is convergent.

The following assertion is proved in \cite[Corollary 5]{ZadSka} (another version \cite[Theorem 1]{Mul}) in the case of the deterministic Dirichlet series with sequence of exponents that increase to infinity, i.e., $f_k(\omega)\equiv f_k\in\mathbb{C}$\ $(k\geq 0)$ and $\lambda_k(\omega)\equiv\lambda_k$, $0\leq\lambda_k <\lambda_{k+1}\to +\infty$\ $(0\leq k\to +\infty)$.

\begin{Proposition}\label{pro-01}\sl
Let $f\in\mathcal{D}$. Then $\sigma_{\text{a}}(f,\omega)\leq\sigma_{\text{conv}}(f,\omega)\leq\alpha_0(\omega)$\
$(\forall \omega\in\Omega)$, and
\begin{equation}\label{absc}
\sigma_a(f,\omega)\geq \gamma(\omega)\alpha_0(\omega)-\delta(\omega)\geq \gamma(\omega)\sigma_{\text{conv}}(f,\omega)-\delta(\omega)
\end{equation}
for arbitrary real random variables $\gamma, \delta$ and for all $\omega\in\Omega$ such that $\gamma(\omega)>0$ and
\begin{equation}\label{Zad}
\sum\limits_{k=0}^{+\infty}|f_k(\omega)|^{1-\gamma(\omega)}e^{-\delta(\omega)\lambda_k(\omega)}<+\infty.
\end{equation}
\end{Proposition}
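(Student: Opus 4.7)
The upper chain $\sigma_a(f,\omega)\le \sigma_{\text{conv}}(f,\omega)\le \alpha_0(\omega)$ is just the first half of \eqref{eq0}, so the real content is \eqref{absc}. The rightmost inequality there is automatic: since $\alpha_0(\omega)\ge\sigma_{\text{conv}}(f,\omega)$ by \eqref{eq0} and $\gamma(\omega)>0$, multiplying and subtracting $\delta(\omega)$ gives $\gamma(\omega)\alpha_0(\omega)-\delta(\omega)\ge\gamma(\omega)\sigma_{\text{conv}}(f,\omega)-\delta(\omega)$. So the whole task reduces to showing $\sigma_a(f,\omega)\ge\gamma(\omega)\alpha_0(\omega)-\delta(\omega)$.

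I would fix $\omega$ satisfying the hypotheses and suppress it from the notation. The plan is to prove that the series $\sum_{k}|f_k|e^{x\lambda_k}$ converges for every real $x<\gamma\alpha_0-\delta$, for then $\sigma_a\ge x$ and letting $x\uparrow\gamma\alpha_0-\delta$ yields the bound. The key is a H\"older-type decomposition
$$|f_k|\,e^{x\lambda_k}\;=\;\bigl(|f_k|^{1-\gamma}e^{-\delta\lambda_k}\bigr)\cdot\bigl(|f_k|^{\gamma}e^{(x+\delta)\lambda_k}\bigr),$$
which is set up so that the first factor is exactly the summand in the assumed convergent series \eqref{Zad}, while the second factor can be controlled via the definition of $\alpha_0$.

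To control the second factor, pick $\varepsilon>0$ small enough that $x+\delta-\gamma(\alpha_0-\varepsilon)<0$; this is possible precisely because $x<\gamma\alpha_0-\delta$. From $\alpha_0=\varliminf_{k}\bigl(-\ln|f_k|/\lambda_k\bigr)$, there is $k_0$ so that $|f_k|\le e^{-(\alpha_0-\varepsilon)\lambda_k}$ for all $k\ge k_0$. Raising to the power $\gamma>0$ and using $\lambda_k\ge 0$ gives $|f_k|^{\gamma}e^{(x+\delta)\lambda_k}\le e^{(x+\delta-\gamma(\alpha_0-\varepsilon))\lambda_k}\le 1$. Substituting into the decomposition yields $|f_k|e^{x\lambda_k}\le|f_k|^{1-\gamma}e^{-\delta\lambda_k}$ for $k\ge k_0$, and summing against \eqref{Zad} closes the argument.

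There is no serious obstacle: the proof is essentially bookkeeping around a Young/H\"older splitting plus the liminf definition of $\alpha_0$. The only small points to watch are (i) the case $\alpha_0=+\infty$, in which any finite $M$ replaces $\alpha_0-\varepsilon$ in the tail estimate, so $x$ can be taken arbitrarily large and $\sigma_a=+\infty$; and (ii) vanishing coefficients $f_k=0$, which either contribute nothing to both sides (when $1-\gamma\ge 0$) or have to be excluded from the sum in \eqref{Zad} for it to be meaningful (when $1-\gamma<0$), so they cause no real trouble either way.
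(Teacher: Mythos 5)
Your argument is correct and essentially the same as the paper's: the splitting $|f_k|e^{x\lambda_k}=\bigl(|f_k|^{1-\gamma}e^{-\delta\lambda_k}\bigr)\bigl(|f_k|e^{(\alpha_0-\varepsilon)\lambda_k}\bigr)^{\gamma}$ together with the liminf tail bound $|f_k|\le e^{-(\alpha_0-\varepsilon)\lambda_k}$ and $\lambda_k\ge 0$ is exactly the paper's computation \eqref{tot1}, and your edge-case remarks are consistent with it. The only minor difference is that you dispose of $\sigma_{\text{conv}}(f,\omega)\le\alpha_0(\omega)$ by citing \eqref{eq0}, which the paper states only under $\lambda_k(\omega)\to+\infty$, whereas the proof of the Proposition reestablishes it for every $\omega$ by exhibiting a subsequence on which $-\ln|f_k(\omega)|/\lambda_k(\omega)<\alpha_0(\omega)+\varepsilon/2$, so that the terms at $x_0=\alpha_0(\omega)+\varepsilon$ do not tend to zero.
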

\begin{proof}[Proof of Proposition \ref{pro-01}] It is obviously that $\sigma_{\text{a}}(f,\omega)\leq\sigma_{\text{conv}}(f,\omega)$.

We prove first that $\sigma_{\text{conv}}(f,\omega)\leq\alpha_0(\omega)$. Indeed, assume that $\alpha_0(\omega)\not=\infty$ and put $x_0=\alpha_0(\omega)+\varepsilon,$ where $\varepsilon>0$ is arbitrary. Then,
$$
|f_k(\omega)|e^{x_0\lambda_k(\omega)}=\exp\{\lambda_k(\omega)(\ln |f_k(\omega)|/\lambda_k(\omega)+x_0)\}.
$$
But by definition of $\alpha_0(\omega)$ there exists a sequence $k_j\to +\infty$\ $(j\to +\infty)$ such that
$$
\ln |f_k(\omega)|/\lambda_k(\omega)>-(\alpha_0(\omega)+\varepsilon/2)\quad (k=k_j,\ j\geq 1).
$$
Thus, $\ln |f_k(\omega)|/\lambda_k(\omega)+x_0>\varepsilon/2\quad (k=k_j,\ j\geq 1)$, and
$$
|f_k(\omega)|e^{x_0\lambda_k(\omega)}\geq e^{\lambda_k(\omega)\varepsilon/2}\geq 1\quad (k=k_j,\ j\geq 1),
$$
therefore $\sigma_{\text{conv}}(f,\omega)\leq\alpha_0(\omega)+\varepsilon$, but $\varepsilon>0$ is arbitrary.

The case $\alpha_0(\omega)=+\infty$ is trivial. In the case $\alpha_0(\omega)=-\infty$ for every $E>0$ and for some sequence $k_j\to +\infty$\ $(j\to +\infty)$ by definition $\alpha_0(\omega)$ we obtain
$$
\ln |f_k(\omega)|/\lambda_k> E\quad (k=k_j,\ j\geq 1).
$$
Therefore $|f_k(\omega)|\exp\{-E\lambda_k\}>1$\ $(k=k_j,\ j\geq 1)$, i.e. the Dirichlet series diverges at the point $z=-E$, but $E>0$ is arbitrary. Thus, $\sigma_{\text{conv}}=-\infty$.

Let now $x_0=\gamma(\omega)(\alpha_0(\omega)-\varepsilon)-\delta(\omega)$\ for arbitrary $\varepsilon>0$. Then,
\begin{gather}
|f_k(\omega)|e^{x_0\lambda_k(\omega)}=|f_k(\omega)|^{1-\gamma(\omega)}e^{-\delta(\omega)\lambda_k(\omega)}
|f_k(\omega)|^{\gamma(\omega)}e^{\gamma(\omega)(\alpha_0(\omega)-\varepsilon)\lambda_k(\omega)}=\nonumber\\
=|f_k(\omega)|^{1-\gamma(\omega)}e^{-\delta(\omega)\lambda_k(\omega)}
\bigg(|f_k(\omega)|e^{(\alpha_0(\omega)-\varepsilon)\lambda_k(\omega)}\bigg)^{\gamma(\omega)}.\label{tot1}
\end{gather}
By definition of $\alpha_0(\omega)$ we obtain
$$
\alpha_0(\omega)<\frac{-\ln f_k(\omega)}{\lambda_k(\omega)}+\varepsilon/2
$$
for $k\geq k_0(\omega)$,
and thus
$$
|f_k(\omega)|e^{(\alpha_0(\omega)-\varepsilon)\lambda_k(\omega)}<\exp\{-\lambda_k \varepsilon/2\}\leq 1\quad (k\geq k_0(\omega)).
$$
Hence by \eqref{tot1} one has
$$
|f_k(\omega)|e^{x_0\lambda_k(\omega)}\leq |f_k(\omega)|^{1-\gamma(\omega)}e^{-\delta(\omega)\lambda_k(\omega)}
$$
and by condition \eqref{Zad} we obtain
$$
\sigma_{a}\geq x_0=\gamma(\omega)(\alpha_0(\omega)-\varepsilon)-\delta(\omega).
$$
But, $\varepsilon>0$ is arbitrary.
\end{proof}
\begin{Remark}\label{rem1}\rm
Condition  \eqref{Zad} from Proposition \ref{pro-01} we get from following condition
\begin{equation}\label{Zad2}
h(\gamma, \delta, \omega):=
\varliminf\limits_{k\to\infty}\frac{(\gamma(\omega)-1)\ln\,|f_k(\omega)|+\delta(\omega)\lambda_k(\omega)}{\ln k}>1.
\end{equation}
Moreover, condition  \eqref{Zad} is weaker.
Also note, that condition \eqref{Zad} implies, that for such $\omega$
$$
(\gamma(\omega)-1)\ln\,|f_k(\omega)|+\delta(\omega)\lambda_k(\omega)\to +\infty\quad (k\to +\infty).
$$
But, in general, from this condition don't follows neither $\lambda_k(\omega)\to +\infty$ nor $\ln\,|f_k(\omega)|\to \infty\ (k\to +\infty)$.
\end{Remark}

From Proposition \ref{pro-01} simply follows such a statement.
\begin{Proposition}\label{pro-2}\sl Let $f\in\mathcal{D}$. Then equalities \eqref{eq01} hold for all $\omega\in\Omega$ such, that
\begin{equation}\label{coef}
\ln k=o(\ln |f_k(\omega)|)\quad (k\to +\infty).
\end{equation}
\end{Proposition}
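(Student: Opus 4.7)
My plan is to reduce the statement directly to Proposition \ref{pro-01}, applied with the constant choice $\gamma(\omega)\equiv 1-\varepsilon$ and $\delta(\omega)\equiv 0$, for arbitrary $\varepsilon\in(0,1)$, and then to send $\varepsilon\to 0^{+}$. Since Proposition \ref{pro-01} already supplies $\sigma_{a}(f,\omega)\leq\sigma_{\text{conv}}(f,\omega)\leq\alpha_{0}(\omega)$ for every $\omega$, the only thing left to prove under hypothesis \eqref{coef} is the reverse inequality $\sigma_{a}(f,\omega)\geq\alpha_{0}(\omega)$.

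To apply the lower bound of Proposition \ref{pro-01} I must verify the summability condition \eqref{Zad} for these $\gamma$, $\delta$. I will check the stronger and simpler form \eqref{Zad2} from Remark \ref{rem1}: with $\gamma=1-\varepsilon$, $\delta=0$ it reduces to
$$
h(1-\varepsilon,0,\omega)=\varliminf_{k\to\infty}\frac{-\varepsilon\ln|f_k(\omega)|}{\ln k}=\varepsilon\,\varliminf_{k\to\infty}\frac{-\ln|f_k(\omega)|}{\ln k}.
$$
Condition \eqref{coef} is exactly $\ln k/(-\ln|f_k(\omega)|)\to 0$, so this $\varliminf$ equals $+\infty$, in particular is strictly greater than $1$. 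Remark \ref{rem1} then yields \eqref{Zad}, and Proposition \ref{pro-01} gives $\sigma_{a}(f,\omega)\geq(1-\varepsilon)\alpha_{0}(\omega)$.

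Letting $\varepsilon\to 0^{+}$ produces $\sigma_{a}(f,\omega)\geq\alpha_{0}(\omega)$ whenever $\alpha_{0}(\omega)\in\mathbb{R}$; combined with the upper bound already in hand this gives \eqref{eq01}. The extreme cases are handled separately but without difficulty: $\alpha_{0}(\omega)=-\infty$ already forces $\sigma_{\text{conv}}(f,\omega)=-\infty$ by the argument in the proof of Proposition \ref{pro-01}, while $\alpha_{0}(\omega)=+\infty$ makes the inequality $\sigma_{a}\geq(1-\varepsilon)\alpha_{0}$ collapse to $\sigma_{a}=+\infty$. I do not anticipate any serious obstacle: the whole argument is essentially a translation of \eqref{coef} into the sufficient condition \eqref{Zad2}, after which everything follows from the already-established Proposition \ref{pro-01} and Remark \ref{rem1}.
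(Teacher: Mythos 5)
There is a genuine gap in the key verification step. You claim that condition \eqref{coef} forces $\varliminf_{k\to\infty}\frac{-\ln|f_k(\omega)|}{\ln k}=+\infty$, but \eqref{coef} only controls the absolute value, i.e. $|\ln|f_k(\omega)||/\ln k\to+\infty$; it says nothing about the sign of $\ln|f_k(\omega)|$. If $|f_k(\omega)|\to+\infty$ along a subsequence (which is perfectly compatible with \eqref{coef} and corresponds to $\alpha_0(\omega)\le 0$, a case the proposition must cover), then on that subsequence $-\varepsilon\ln|f_k(\omega)|/\ln k\to-\infty$, so \eqref{Zad2} fails for $\gamma=1-\varepsilon$; worse, \eqref{Zad} itself fails, since $\sum_k|f_k(\omega)|^{1-\gamma}=\sum_k|f_k(\omega)|^{\varepsilon}$ diverges. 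Concretely, take $f_k=e^{k^2}$ and $\lambda_k(\omega)\equiv k^2$: then \eqref{coef} holds and $\alpha_0=\sigma_a=-1$, so the conclusion is true, but your choice $\gamma=1-\varepsilon$, $\delta=0$ gives $\sum_k e^{\varepsilon k^2}=+\infty$ and Proposition \ref{pro-01} cannot be applied. (Indeed, if it could, you would get $\sigma_a\ge(1-\varepsilon)\alpha_0>\alpha_0$ whenever $\alpha_0<0$, contradicting $\sigma_a\le\alpha_0$.)

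The repair is exactly the paper's route: split the series into a finite initial piece plus the sum over $N^+=\{k:\ \ln|f_k(\omega)|\ge\ln k\}$ and the sum over $N^-=\{k:\ \ln|f_k(\omega)|\le-\ln k\}$; under \eqref{coef} these two sets eventually exhaust all indices. On $N^-$ your choice $\gamma=1-\varepsilon$ works as you computed; on $N^+$ one must instead take $\gamma=1+\varepsilon$, so that $(\gamma-1)\ln|f_k(\omega)|=\varepsilon\ln|f_k(\omega)|>0$ and \eqref{Zad2} holds there as well. Applying Proposition \ref{pro-01} to each piece separately and using that the abscissa of absolute convergence of a finite sum of Dirichlet series is the minimum of the individual abscissas yields $\sigma_a(f,\omega)\ge\min\{(1+\varepsilon)\alpha_0(\omega),(1-\varepsilon)\alpha_0(\omega)\}$, and letting $\varepsilon\to0^+$ finishes the proof. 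Your handling of the upper bound and of the extreme cases is fine; the single uniform choice of $\gamma$ is the only, but essential, problem.
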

\begin{proof}[Proof of Proposition \ref{pro-2}.]
For fixed $\omega$ we put $\varepsilon_k(\omega)\overset{def}=\frac{\ln k}{\ln|f_k(\omega)|}$, $N^+\!\!\overset{def}=\{k\colon\frac1{\varepsilon_k(\omega)}\geq 1\}$, $N^-\overset{def}=\{k\colon\frac1{\varepsilon_k(\omega)}\leq -1\}$. Then,
there exists $k_0=k_0(\omega)\in\mathbb{N}$ such that $k\in N^+\cap N^-$ for all $k> k_0$. So we can write
		\begin{gather*}
		\sum_{k=0}^{+\infty}f_k(\omega)e^{z\lambda_{k}(\omega)}=\sum_{k=0}^{k_0}f_k(\omega)e^{z\lambda_{k}(\omega)}+\sum_{k\in N^+}f_k(\omega)e^{z\lambda_{k}(\omega)}+\\ +\sum_{k\in N^-}f_k(\omega)e^{z\lambda_{k}(\omega)}=f_1+f_2+f_3.
		\end{gather*}
		Remark that for $f_2$  condition \eqref{Zad} holds with $\gamma(\omega)=\gamma_2(\omega)=1+\varepsilon,\ \delta(\omega)=0$ and for $f_3$ with $\gamma(\omega)=\gamma_3(\omega)=1-\varepsilon,\ \delta(\omega)=0$, where $0<\varepsilon<1$.
		Indeed, for $f_2$ we have
		\begin{gather*}
		h(\gamma, 0, \omega)=
		\varliminf\limits_{k\to\infty,\ k\in N^+}\frac{(\gamma(\omega)-1)\ln\,|f_k(\omega)|}{\ln k}=\\
		=\varliminf\limits_{k\to\infty,\ k\in N^+}\frac{\varepsilon\ln\,|f_k(\omega)|}{\ln k}=+\infty
		\end{gather*}
		and for  $f_3$
		\begin{gather*}
		h(\gamma, 0, \omega)=
		\varliminf\limits_{k\to\infty,\ k\in N^-}\frac{(\gamma(\omega)-1)\ln\,|f_k(\omega)|}{\ln k}=\\
		=\varliminf\limits_{k\to\infty,\ k\in N^-}\frac{-\varepsilon\ln\,|f_k(\omega)|}{\ln k}=+\infty.
		\end{gather*}
		Therefore condition \eqref{Zad} with $\delta(\omega)=0$ holds in both cases.
		Hence, by inequality \eqref{absc} from Proposition \ref{pro-01} we get
		\begin{gather*}
		\sigma(F,\omega)=\min\{\sigma(f_1,\omega), \sigma(f_2,\omega)\}\geq \min\{\gamma_2(\omega)\alpha^{+}(\omega), \gamma_3(\omega)\alpha^{-}(\omega)\}-\delta(\omega)\geq\\
		\geq\min\{\gamma_2(\omega)\alpha_0(\omega), \gamma_3(\omega)\alpha_0(\omega)\}
		\geq \min\{(1+\varepsilon)\alpha_0(\omega), (1-\varepsilon)\alpha_0(\omega)\},
		\end{gather*}
		where $$\alpha^{\pm}(\omega)=
		\varliminf\limits_{k\to\infty,\ k\in N^\pm}\frac{-\ln\,|f_k(\omega)|}{\lambda_k(\omega)}.$$
		Whence, using arbitrariness of $0<\varepsilon<1$ we get $\sigma(F,\omega)\geq \alpha_0(\omega)$. It remains to recall that $\sigma(F,\omega)\leq\sigma_{\text{зб}}(F,\omega)\leq\alpha_0(\omega)$.
\end{proof}

\begin{Remark}\rm If sequences $\Lambda$ and
$\mathbf{f}$ such that $\big(|f_k(\omega)|e^{x\lambda_{k}(\omega)}\big)$ is the sequences of independent random variables for every $x\in\mathbb{R}$, then by Kolmogorov's Zero-One Law (\cite{Kah2}) random variable $\sigma(f,\omega)$ is almost surely (a.s.) constant. That is $\sigma(f,\omega)=\sigma\in [-\infty,+\infty]$ a.s.
In the book \cite{Kah2} it is written when $\Lambda$ monotonic increasing to infinity sequence $\lambda_k(\omega)\equiv \lambda_k$.
The same we get when $\Big(\dfrac{-\ln |f_k(\omega)|}{\lambda_{k}(\omega)}\Big)$ is the sequences of independent random variables, and $\tau(\omega, \Lambda)=0$ or $h(\omega, \mathbf{f})=0$.  It follows by Proposition \ref{pro-2} from equalities \eqref{eq01}.
\end{Remark}

In the papers  \cite{Tian}--\cite{ska_shap_buk} considered question about abscissas of convergence
random Dirichlet series from the class  $\mathcal{D}$ in case, when $\Lambda_+=(\lambda_k)$ is increasing
sequence of positive numbers, i.e., $0=\lambda_0<\lambda_k<\lambda_{k+1}\to +\infty$\ $(1\leq k\to +\infty)$ and $\tau(\omega,\Lambda)\equiv\tau(\Lambda)<+\infty$.

%

We have such elementary assertion.

\begin{Proposition}\label{pro4}\sl Let  $f\in\mathcal{D}(\Lambda)$ be a Dirichlet series of the form
$$
f(z)=f(z,\omega)=\sum\limits_{k=0}^{+\infty}a_k Z_k(\omega)e^{z\lambda_k(\omega)},
$$
where $(Z_k(\omega))$ is a sequence of random complex-valued variables.

\noindent $1^0.$ If the condition $\tau(\omega,\Lambda)=0$  holds  and
\begin{equation}\label{eq3}
\lim\limits_{k\to +\infty}\dfrac{-\ln |Z_k(\omega)|}{\lambda_k(\omega)}=0\quad
\text{a.s.},
\end{equation}
then
$$
\sigma_{\text{conv}}(f,\omega)=\sigma_a(f,\omega)=\alpha_0(\omega)=\varliminf
\limits_{k\to+\infty}\dfrac{-\ln|a_k|}{\lambda_k(\omega)}\quad \text{ a.s.}
$$
$2^0.$\ If $\alpha_0(\omega)=+\infty$ and the conditions
$\tau(\omega,\Lambda)<+\infty$,
\begin{equation}\label{eq03}
\varliminf\limits_{k\to +\infty}\dfrac{-\ln |Z_k(\omega)|}{\lambda_k(\omega)}>-\infty\ \
\text{a.s.}
\end{equation}
hold, then
$\sigma_a(f,\omega)=+\infty$ a.s.
\end{Proposition}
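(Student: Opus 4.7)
The plan is to deduce both parts from material already assembled in the excerpt: part $1^{0}$ should reduce to the equalities \eqref{eq01}, and part $2^{0}$ should come out of a direct majorization of the series of absolute values. No genuinely new ideas are needed; the work is mostly bookkeeping with almost-sure events.

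For part $1^{0}$, write $f_{k}(\omega)=a_{k}Z_{k}(\omega)$ and split
$$
\frac{-\ln|f_{k}(\omega)|}{\lambda_{k}(\omega)}=\frac{-\ln|a_{k}|}{\lambda_{k}(\omega)}+\frac{-\ln|Z_{k}(\omega)|}{\lambda_{k}(\omega)}.
$$
By \eqref{eq3} the second summand tends to $0$ a.s., so on the full-measure event where \eqref{eq3} holds, taking $\varliminf_{k\to+\infty}$ of both sides gives $\varliminf_{k} \frac{-\ln|f_{k}(\omega)|}{\lambda_{k}(\omega)}=\alpha_{0}(\omega)$ in the notation of part $1^{0}$. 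Combined with $\tau(\omega,\Lambda)=0$, the chain of equalities \eqref{eq01} (which holds for every $\omega$ satisfying $\tau(\omega)=0$) yields $\sigma_{\text{conv}}(f,\omega)=\sigma_{a}(f,\omega)=\alpha_{0}(\omega)$ a.s.

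For part $2^{0}$, I would fix $\omega$ in the intersection of the three full-measure events given by the hypotheses and estimate the tail of $\sum|a_{k}Z_{k}(\omega)|e^{x\lambda_{k}(\omega)}$ for arbitrary $x\in\mathbb{R}$. Set $B(\omega):=\tau(\omega,\Lambda)+1<+\infty$ and choose any finite $C(\omega)>-\varliminf_{k}\frac{-\ln|Z_{k}(\omega)|}{\lambda_{k}(\omega)}$; then for $k$ large enough one has simultaneously $\lambda_{k}(\omega)\geq (\ln k)/B(\omega)$ and $|Z_{k}(\omega)|\leq e^{C(\omega)\lambda_{k}(\omega)}$. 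Since $\alpha_{0}(\omega)=+\infty$, for each fixed $A$ we also obtain $|a_{k}|\leq e^{-A\lambda_{k}(\omega)}$ eventually. Picking $A=A(x,\omega):=x+C(\omega)+2B(\omega)$ produces
$$
|a_{k}Z_{k}(\omega)|\,e^{x\lambda_{k}(\omega)}\leq e^{(x+C(\omega)-A)\lambda_{k}(\omega)}=e^{-2B(\omega)\lambda_{k}(\omega)}\leq k^{-2}
$$
for all sufficiently large $k$, hence absolute convergence at the point $x$. As $x\in\mathbb{R}$ is arbitrary, $\sigma_{a}(f,\omega)=+\infty$ a.s.

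The main obstacle, such as there is one, is not an analytic step but the \emph{ordering of quantifiers}: the constants $B(\omega)$, $C(\omega)$, and $k_{0}(A,\omega)$ all depend on $\omega$, and $A$ must be chosen after $x$ but before the threshold $k_{0}$; one must be careful to form a single full-measure event on which all three hypotheses hold simultaneously and then allow $x$ to vary for $\omega$ inside that event, rather than swapping the order and accidentally requiring a uniform bound over $x$.
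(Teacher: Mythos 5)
Your proposal is correct. Part $1^{0}$ is exactly the paper's route: the paper disposes of the whole proposition in one line by invoking the chain \eqref{eq0} (equivalently \eqref{eq01} when $\tau(\omega,\Lambda)=0$), and your only added step --- splitting $\frac{-\ln|a_kZ_k(\omega)|}{\lambda_k(\omega)}$ into the deterministic and random summands and using \eqref{eq3} to identify $\varliminf_k\frac{-\ln|f_k(\omega)|}{\lambda_k(\omega)}$ with $\varliminf_k\frac{-\ln|a_k|}{\lambda_k(\omega)}$ --- is the implicit bookkeeping the paper leaves to the reader. For part $2^{0}$ you diverge slightly: the paper again just cites \eqref{eq0}, namely $\alpha_0(\omega)\leq\sigma_a(f,\omega)+\tau(\omega,\Lambda)$ applied to the coefficients $f_k=a_kZ_k$ (whose $\varliminf$ is $+\infty$ by the hypothesis $\alpha_0=+\infty$ together with \eqref{eq03}), whereas you re-derive that inequality from scratch via the explicit tail bound $|a_kZ_k(\omega)|e^{x\lambda_k(\omega)}\leq k^{-2}$. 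Your estimate is a correct, self-contained proof of precisely the special case of \eqref{eq0} that is needed; it buys transparency (one sees where $\tau<+\infty$ enters, through $\lambda_k(\omega)\geq\ln k/B(\omega)$) at the cost of redoing work the paper treats as known. Two trivial points you could make explicit: $\tau(\omega,\Lambda)<+\infty$ guarantees $\lambda_k(\omega)\to+\infty$, which is the standing hypothesis under which \eqref{eq0} is stated; and in part $1^{0}$ the additivity of the $\varliminf$ uses that the second summand has an actual (finite) limit, which is exactly what \eqref{eq3} provides.
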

Proposition \ref{pro4} we immediately obtain from inequalities \eqref{eq0}.

In the paper  \cite{TianF} is only  $1^0$ for the case of the Dirichlet series $f\in\mathcal{D}(\Lambda_+)$ of the form
$$
f(z)=f(z,\omega)=\sum\limits_{k=0}^{+\infty}a_k Z_k(\omega)e^{z\lambda_k}.
$$

 From Proposition \ref{pro4} in particular follows  theorems 1 (when $\alpha_0=+\infty$) and 3 (when $\alpha_0=0$) from \cite{TianF}, which are proved under such conditions for mathematical expectation
\begin{gather}\label{eq1}
(\exists \alpha>0)\colon\ \sup\{\mathbf{M}|Z_k|^{\alpha}\colon\ k\geq 0\}<+\infty,\\
\label{eq2}
(\exists \beta>0)\colon\ \sup\{\mathbf{M}|Z_k|^{-\beta}\colon\ k\geq 0\}<+\infty,
\end{gather}
 By the Bienayme-Chebyshev inequality (\cite{God,Sav}) and the Borel-Cantelli\break Lemma \big(\cite{Kah2}, also about refined Second Borel-Cantelli lemma see \cite{Erd_Ren}--\cite{bil}\big) from conditions \eqref{eq1} and \eqref{eq2} easy follows, that a.s. for all enough large $k$  inequalities $k^{-\gamma}\leq |Z_k(\omega)|<k^{\gamma}$\ with $\gamma=\max\{2/\alpha,2/\beta\}$ hold, and if $\tau(\Lambda)=0$, then and condition \eqref{eq3}. Similarly, if $\tau(\Lambda)<+\infty$, then from condition \eqref{eq1} follows condition \eqref{eq03}.

In papers \cite{Heden,Fil,Ding} in the case of independent random variables $\mathbf{f}=(f_k)$, besides, generalized on class $\mathcal{D}(\Lambda)$ assertion of known Blackwell's  conjecture on power series with random coefficients, proved in \cite{Ryll} (see also \cite{Kah2}).
Besides, in \cite{Fil} in case, when $f_k(\omega)=a_k Z_k(\omega)$,\ $\tau(\Lambda)<+\infty$,\ $0<c_1\leq |Z_k(\omega)|\leq c_2<+\infty$ a.s. and $\mathbf{M}Z_k=0$\ $(k\geq 0)$, proved, that $\sigma_{\text{conv}}(f,\omega)\leq \sigma_a(f,\omega)+\tau(\Lambda)/2$ a.s.

It should be noted, that condition  \eqref{eq3} can be replaced by condition on sequence of distribution functions of random variables $(|Z_k(\omega)|)$. Exactly,\break condition \eqref{eq3} follows from such condition (see \cite{ska_shap_buk})
$$
(\forall \varepsilon >0)\colon\ \sum\limits_{k=0}^{+\infty}P\{\omega\colon |\ln|Z_k(\omega)||> \varepsilon\lambda_k\}<+\infty,
$$
and the previous condition holds, if and only if for all $\varepsilon >0$ the series
$$
\sum\limits_{k=0}^{+\infty}\big(1-F^*_k(e^{\varepsilon\lambda_k})+F^*_k(e^{-\varepsilon\lambda_k})\big)<+\infty
$$
is convergent,
where $F_k^*(x):=P\{\omega\colon |Z_k(\omega)|<x\}$ is the distribution function of $|Z_k(\omega)|$. In particular, from this condition follows $\lim_{k\to +\infty}F_k^*(+0)=0$.

In general case, for Dirichlet series  from the class $\mathcal{D}(\Lambda_+)$ in \cite{ska_shap_buk} (see also similar results for random gap power series in \cite{Arn}--\cite{shap_ska_ijma}) are proved such two theorems.

\begin{Theorem}[\cite{ska_shap_buk}]\label{the1}\sl Let $f\in\mathcal{D}(\Lambda_+)$ and $\mathbf{f}=\big(f_k(\omega)\big)$ be a sequence such that $\big(|f_k(\omega)|\big)$ is the sequence of pairwise independent random variables with functions of distribution
$ F_k(x):=P\{\omega : |f_{k}(\omega)|<x\},\ x\in \mathbb R,\
k\geq 0. $ The following assertions are true:

\noindent {\bf a)}\ If $\sigma(\omega)=\sigma(f,\omega)\geq\rho\in (-\infty,
+\infty)$\ a.s.,\ then $(\forall \varepsilon>0)\colon\
\sum_{k=0}^{+\infty}(1-F_k((e^{-\rho}+\varepsilon)^{\lambda_k}))<\infty$.

\noindent {\bf b)}\ If exists a sequence $(\delta_k)$ such, that
$\delta_k> -\infty$\ $(k\geq 0)$, $\varliminf\limits_{k\to +\infty}\delta_k= e^{-\rho}$, $\rho\in (-\infty,+\infty],$\ and $
\sum_{k=0}^{+\infty}(1-F_k(\delta_k^{\lambda_k}))=+\infty $, then
$\sigma(f,\omega)\leq \rho$ a.s.
\end{Theorem}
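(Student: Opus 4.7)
My plan is to reduce both parts to the pairwise-independent version of the second Borel--Cantelli lemma cited in the excerpt (\cite{Erd_Ren}--\cite{bil}), applied to the events $A_k(c)=\{\omega\colon|f_k(\omega)|\geq c^{\lambda_k}\}$. These events inherit pairwise independence from $(|f_k(\omega)|)$, and the identity $\sum(1-F_k(c^{\lambda_k}))=\sum P(A_k(c))$ converts the statement about tail-probabilities into a statement about the a.s.\ behaviour of $|f_k(\omega)|^{1/\lambda_k}$, which is exactly what controls $\alpha_0(\omega)$ and hence $\sigma(f,\omega)$ via \eqref{eq0}.

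For part a), I combine the hypothesis $\sigma(f,\omega)\geq\rho$ a.s.\ with the universal bound $\sigma_a(f,\omega)\leq\alpha_0(\omega)$ from \eqref{eq0} to deduce $\alpha_0(\omega)\geq\rho$ a.s. Fixing $\varepsilon>0$, I choose $\varepsilon'>0$ small enough that $e^{-(\rho-\varepsilon')}\leq e^{-\rho}+\varepsilon$. By the definition of $\alpha_0$, a.s.\ for all sufficiently large $k$ one then has $|f_k(\omega)|<(e^{-\rho}+\varepsilon)^{\lambda_k}$, so the events $A_k=\{|f_k|\geq(e^{-\rho}+\varepsilon)^{\lambda_k}\}$ satisfy $P(A_k\text{ i.o.})=0$. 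The contrapositive of the pairwise-independent second Borel--Cantelli lemma then forces $\sum_{k}P(A_k)<+\infty$, which is exactly the asserted inequality.

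For part b) the case $\rho=+\infty$ is vacuous, so I take $\rho$ finite. The divergence $\sum(1-F_k(\delta_k^{\lambda_k}))=+\infty$ is the same as $\sum P(|f_k|\geq\delta_k^{\lambda_k})=+\infty$, and the pairwise-independent second Borel--Cantelli supplies an a.s.\ random subsequence $(k_j(\omega))$ along which $|f_{k_j}|\geq\delta_{k_j}^{\lambda_{k_j}}$. For any fixed $x>\rho$, the condition $\varliminf_k\delta_k=e^{-\rho}$ lets me choose a deterministic $k_0$ with $\delta_k>e^{-x}$ for every $k\geq k_0$ (take any $\varepsilon<(e^{-\rho}-e^{-x})/2$), so a.s.
\[
|f_{k_j}(\omega)|\,e^{x\lambda_{k_j}}\geq(\delta_{k_j}e^x)^{\lambda_{k_j}}\to+\infty,
\]
which rules out absolute convergence of $f$ at $x$ and gives $\sigma(f,\omega)\leq x$ a.s. Taking the intersection over the countable family $x_n=\rho+1/n$ yields $\sigma(f,\omega)\leq\rho$ a.s.

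The only genuine obstacle I anticipate is purely bookkeeping: the exceptional null sets are parametrised by $\varepsilon$ in a) and by $x$ in b), so one must restrict to a countable (rational) parameter set in order to amalgamate them into a single null set. The probabilistic engine --- that pairwise independence is enough for the second Borel--Cantelli --- is granted by the Erd\H{o}s--R\'enyi type refinements already cited, and the analytic calibration of $\varepsilon'$ in a) and of $k_0$ in b) is elementary.
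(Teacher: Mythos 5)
Your argument is correct, and it follows essentially the same route the paper itself takes: translating the bound on $\sigma(f,\omega)=\sigma_a(f,\omega)$ into a bound on $\alpha_0(\omega)=\varliminf_k\frac{-\ln|f_k(\omega)|}{\lambda_k}$ via \eqref{eq0}, rewriting this as a statement that the pairwise independent events $A_k$ occur only finitely often (resp.\ infinitely often), and invoking the refined second Borel--Cantelli lemma of Erd\H{o}s--R\'enyi/Billingsley. Note that the paper states Theorem \ref{the1} without proof (citing \cite{ska_shap_buk}), but your method is precisely the one used in its proofs of the analogous Theorems \ref{the3} and \ref{the3'} for random exponents, so there is nothing substantive to add.
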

\begin{Theorem}[\cite{ska_shap_buk}]\label{the2}\sl  Let $f\in\mathcal{D}(\Lambda_+)$ and $\mathbf{f}=\big(f_k(\omega)\big)$ be a sequence of random variables with functions of distribution
$ F_k(x),\ x\in \mathbb R,$\ of the random variables $|f_k(\omega)|$,\ $k\geq 0.$ The following assertions are true:

\noindent {\bf a)}\ If there exist $\rho\in (-\infty, +\infty)$ and a sequence $(\varepsilon_k)$
that $\varepsilon_k\to +0$\ $(k\to +\infty)$ and
$\sum_{k=0}^{+\infty}(1-F_k((e^{-\rho}+\varepsilon_k)^{\lambda_k}))<\infty$, then\
$\sigma(f,\omega)\geq \rho$\ a.s.

\noindent {\bf b)} If $\sigma(f,\omega)=-\infty$\ a.s.,\ then
 $(\forall E>1)\colon$\
$\sum_{k=0}^{+\infty}(1-F_k(E^{\lambda_k}))=+\infty$.
\end{Theorem}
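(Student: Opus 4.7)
The plan is to reduce both parts to a first-Borel--Cantelli argument and then invoke Proposition~\ref{pro-01} pathwise. The key advantage is that the first Borel--Cantelli lemma requires no independence on the $(f_k(\omega))$, which matches the fact that (unlike Theorem~\ref{the1}) Theorem~\ref{the2} imposes no independence hypothesis.

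For part (a), I would set $A_k:=\{\omega:|f_k(\omega)|\ge (e^{-\rho}+\varepsilon_k)^{\lambda_k}\}$, so that $P(A_k)=1-F_k((e^{-\rho}+\varepsilon_k)^{\lambda_k})$ and the hypothesis is $\sum_k P(A_k)<+\infty$. Borel--Cantelli then yields that a.s.\ only finitely many $A_k$ occur, so on a full-measure set there exists $k_0(\omega)$ with $|f_k(\omega)|<(e^{-\rho}+\varepsilon_k)^{\lambda_k}$ for all $k\ge k_0(\omega)$. Taking logarithms and using $\varepsilon_k\to 0^+$ gives
$$
\frac{-\ln|f_k(\omega)|}{\lambda_k}>-\ln(e^{-\rho}+\varepsilon_k)\longrightarrow \rho,
$$
whence $\alpha_0(\omega)\ge \rho$ a.s. To upgrade this into $\sigma_a(f,\omega)\ge \rho$, I would apply Proposition~\ref{pro-01} pathwise with random parameters $\gamma(\omega)>0$ and $\delta(\omega)$ chosen so that $\gamma\alpha_0-\delta\ge \rho$ and the summability condition \eqref{Zad} holds; the deterministic envelope $|f_k(\omega)|^{1-\gamma}\le (e^{-\rho}+\varepsilon_k)^{\lambda_k(1-\gamma)}$ (valid a.s.\ for $k$ large when $\gamma<1$) together with $\tau(\Lambda)<+\infty$, which gives $\lambda_k\ge \ln k/(\tau(\Lambda)+\eta)$ eventually, reduces \eqref{Zad} to a convergent polynomial-type series.

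Part (b) follows from (a) by contraposition. Suppose some $E>1$ satisfies $\sum_k(1-F_k(E^{\lambda_k}))<+\infty$. Set $\rho:=-\ln E\in(-\infty,0)$ and pick any $\varepsilon_k\downarrow 0$. Monotonicity of $F_k$ and $(E+\varepsilon_k)^{\lambda_k}\ge E^{\lambda_k}$ give
$$
\sum_{k=0}^{+\infty}\bigl(1-F_k((e^{-\rho}+\varepsilon_k)^{\lambda_k})\bigr)\le \sum_{k=0}^{+\infty}\bigl(1-F_k(E^{\lambda_k})\bigr)<+\infty,
$$
so part (a) delivers $\sigma(f,\omega)\ge -\ln E>-\infty$ a.s., contradicting $\sigma(f,\omega)=-\infty$ a.s.

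The main technical obstacle is the second half of part (a). The pathwise inequalities $\sigma_a\le \alpha_0\le \sigma_a+\tau(\Lambda)$ from \eqref{eq0} leave a possible gap of size $\tau(\Lambda)$, so the crude bound $\alpha_0\ge \rho$ alone only yields $\sigma_a\ge \rho-\tau(\Lambda)$. The delicate step is therefore to balance $\gamma(\omega)$ and $\delta(\omega)$ in Proposition~\ref{pro-01} so that \eqref{Zad} is satisfied while retaining $\gamma\alpha_0-\delta\ge \rho$, exploiting the full strength of the a.s.\ envelope on $|f_k(\omega)|$ rather than just the inequality it produces for $\alpha_0$.
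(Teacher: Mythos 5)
First, a structural point: the paper does not prove Theorem~\ref{the2} at all --- it is quoted from \cite{ska_shap_buk} --- so your attempt can only be measured against the proof of its in-paper analogue, Theorem~\ref{the3'}. Your opening step is correct and is exactly the mechanism used there: the first Borel--Cantelli lemma (no independence needed) gives a.s.\ an index $k_0(\omega)$ with $|f_k(\omega)|<(e^{-\rho}+\varepsilon_k)^{\lambda_k}$ for all $k\ge k_0(\omega)$, hence $\alpha_0(\omega)=\varliminf_{k\to+\infty}\frac{-\ln|f_k(\omega)|}{\lambda_k}\ge\varliminf_{k\to+\infty}\bigl(-\ln(e^{-\rho}+\varepsilon_k)\bigr)=\rho$ a.s. Your part (b) is also sound as a contraposition of (a); note that for (b) the weaker consequence $\sigma_a\ge\alpha_0-\tau(\Lambda)\ge\rho-\tau(\Lambda)>-\infty$ from \eqref{eq0} already suffices, so (b) does not actually depend on the problematic step discussed next.

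The genuine gap is the second half of part (a), which you only sketch: upgrading $\alpha_0\ge\rho$ to $\sigma_a\ge\rho$ by ``balancing'' $\gamma$ and $\delta$ in Proposition~\ref{pro-01}. This cannot be carried out from the data you have. Borel--Cantelli yields only an \emph{upper} envelope for $|f_k(\omega)|$; to verify \eqref{Zad} with $\gamma>1$ you would need a lower bound on $|f_k(\omega)|$, which is unavailable. With $0<\gamma\le 1$ the envelope bounds the terms of \eqref{Zad} by $\exp\{-\lambda_k((1-\gamma)\rho_k+\delta)\}$ with $\rho_k\to\rho$, and summability against $\lambda_k\sim\ln k/\tau(\Lambda)$ forces $(1-\gamma)\rho+\delta>\tau(\Lambda)$; in the worst case $\alpha_0(\omega)=\rho$ the conclusion of \eqref{absc} then reads $\sigma_a\ge\gamma\rho-\delta=\rho-((1-\gamma)\rho+\delta)<\rho-\tau(\Lambda)$, no better than \eqref{eq0}. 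In fact the statement with $\sigma=\sigma_a$ cannot hold as reproduced when $\tau(\Lambda)>0$: take $\lambda_k=\ln(k+1)$ (so $\tau(\Lambda)=1$) and $f_k(\omega)\equiv e^{-\rho\lambda_k}$; then $F_k((e^{-\rho}+\varepsilon_k)^{\lambda_k})=1$ for all $k\ge1$, the hypothesis of (a) holds trivially, yet $\sum_k|f_k|e^{x\lambda_k}=\sum_k(k+1)^{x-\rho}$ gives $\sigma_a=\rho-1<\rho$. So either the quoted theorem carries an extra hypothesis from \cite{ska_shap_buk} (e.g.\ $\tau(\Lambda)=0$, or $\ln k=o(-\ln|f_k(\omega)|)$ a.s., which is precisely how the paper closes the same gap in Theorem~\ref{the3'} via Proposition~\ref{pro-2}), or its $\sigma$ is $\alpha_0$ rather than $\sigma_a$. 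As written, your balancing step is not a proof; what your argument actually establishes is $\alpha_0\ge\rho$ a.s., hence only $\sigma_a\ge\rho-\tau(\Lambda)$ a.s.
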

In this paper we prove similar theorems for Dirichlet series with random exponents $(\lambda_k(\omega))$ and deterministic coefficients $\mathbf{f}=(f_k)$,\ $f_k\in\mathbb{C},$\ $k\geq 0$.

Note, in paper \cite{Nik} the methods of probability theory are used to prove a number of theorems on the behavior of Dirichlet series with independent exponents. The results so obtained are applied to the theory of the $ \zeta$-function and to the behavior of the solution of the wave equation as $ t\to\infty$.
In article \cite{Hol} a power series of the form
$
\sum\nolimits_{k=0}^{+\infty}z^{X_k(\omega)}
$
are studied, where $(X_k(\omega))$ is a strictly increasing integer-valued stochastic process.

\medskip\noindent {\bf  2.\ The main results:\  series with random exponents.}
In this section we assume, that $f_k(\omega)\equiv f_k\in\mathbb{C}$\ $(k\geq 0)$ and condition

\smallskip\centerline{$
\ln k=o(\ln |f_k|)\quad (k\to +\infty),
$}

\smallskip\noindent
holds, that condition \eqref{coef}  satisfies for all $\omega\in\Omega$, therefore by Proposition \ref{pro-2} equalities \eqref{eq01} for every $\omega\in\Omega$ hold.

\begin{Theorem}\label{the3}\sl Let $f\in\mathcal{D}(\Lambda)$ and $\Lambda=\big(\lambda_k(\omega)\big)$ be a sequence of pairwise independent random variables with distribution functions
$ F_k(x):=P\{\ \omega : \lambda_{k}(\omega)<x\},\ x\in \mathbb R,\
k\geq 0. $ The following assertions hold:

\noindent $\mathbf{i)}$\ $\sigma(\omega)=\sigma(f,\omega)\geq\rho\in (0,
+\infty)$\ a.s.\ $\Longrightarrow$ $(\forall \varepsilon\in (0,\rho) )\colon\
\sum_{k=0}^{+\infty}\big(1-F_k(\frac{\ln
|f_k|}{-\rho+\varepsilon})\big)$ $<\infty$;

\noindent $\mathbf{ii)}$\ $0\geq\sigma(\omega)=\sigma(f,\omega)\geq\rho\in (-\infty,0]$\ a.s.\ $\Longrightarrow$ $(\forall \varepsilon >0)\colon\
\sum_{k=0}^{+\infty}F_k(\frac{\ln
|f_k|}{-\rho+\varepsilon})<\infty$;
\end{Theorem}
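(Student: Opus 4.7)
The plan is to reduce both parts to the Erd\H{o}s--R\'enyi version of the second Borel--Cantelli lemma for pairwise independent events, cited in the excerpt as \cite{Erd_Ren}. Since $\ln k = o(\ln|f_k|)$ is the standing assumption of Section~2, Proposition~\ref{pro-2} gives
\[
\sigma(f,\omega)=\alpha_0(\omega)=\varliminf_{k\to\infty}\frac{-\ln|f_k|}{\lambda_k(\omega)}\qquad(\omega\in\Omega).
\]
By the characterization of $\varliminf$, the hypothesis $\sigma(f,\omega)\ge\rho$ a.s.\ is equivalent to the statement that, for every $\delta>0$, the events
\[
A_k(\delta):=\Bigl\{\omega:\tfrac{-\ln|f_k|}{\lambda_k(\omega)}\le\rho-\delta\Bigr\}
\]
occur for only finitely many $k$ almost surely, i.e.\ $P(\limsup_k A_k(\delta))=0$.

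Each $A_k(\delta)$ is a Borel function of the single variable $\lambda_k(\omega)$, so pairwise independence of the $\lambda_k$ transfers to $(A_k(\delta))_k$. The Erd\H{o}s--R\'enyi extension of the second Borel--Cantelli lemma then rules out $\sum_k P(A_k(\delta))=+\infty$, forcing $\sum_k P(A_k(\delta))<+\infty$ for every fixed $\delta>0$. All that remains is to convert $P(A_k(\delta))$ into the $F_k$-sums at $t_k:=\tfrac{\ln|f_k|}{-\rho+\delta}$, which requires a short sign analysis on $\rho-\delta$ after multiplying the defining inequality of $A_k(\delta)$ by $\lambda_k(\omega)>0$ to obtain $-\ln|f_k|\le(\rho-\delta)\lambda_k$.

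For part \textbf{i)} choose $\delta=\varepsilon\in(0,\rho)$, so $\rho-\varepsilon>0$; dividing by this positive quantity preserves the inequality and yields $A_k(\varepsilon)=\{\lambda_k\ge t_k\}$, hence $P(A_k(\varepsilon))=1-F_k(t_k)$ exactly, and the Borel--Cantelli bound gives the stated convergence. For part \textbf{ii)} take any $\varepsilon>0$; since $\rho\le 0$ we have $\rho-\varepsilon<0$, division flips the inequality and yields $A_k(\varepsilon)\subseteq\{\lambda_k\le t_k\}$ (vacuously when $|f_k|\le 1$, in which case $t_k\le 0$ and $F_k(t_k)=0$ because $\lambda_k>0$), so $P(A_k(\varepsilon))\ge F_k(t_k)$, and the summability of $P(A_k(\varepsilon))$ transfers to $\sum_k F_k(t_k)<+\infty$. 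The main obstacle is purely bookkeeping --- keeping the two sign regimes of $\rho-\varepsilon$ separate, and bridging the mismatch between the strict inequality in $F_k(x)=P\{\lambda_k<x\}$ and the weak inequality in $A_k(\delta)$; the latter is harmless because it produces an identity in case~\textbf{i)} and the one-sided (correct-direction) bound $P(A_k(\varepsilon))\ge F_k(t_k)$ in case~\textbf{ii)}.
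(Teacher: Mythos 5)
Your proof is correct and follows essentially the same route as the paper's: identify $\sigma(f,\omega)$ with $\alpha_0(\omega)=\varliminf_k\frac{-\ln|f_k|}{\lambda_k(\omega)}$ via the standing assumption and Proposition \ref{pro-2}, translate $\sigma\ge\rho$ a.s.\ into ``$A_k$ finitely often'' a.s., and apply the second Borel--Cantelli lemma for pairwise independent events to force $\sum_k P(A_k)<\infty$, then read off $P(A_k)$ in terms of $F_k$ with the appropriate sign analysis. The only cosmetic slip is in part \textbf{ii)}, where $A_k(\varepsilon)$ actually \emph{equals} $\{\lambda_k\le t_k\}$ (not merely is contained in it), which is what justifies the bound $P(A_k(\varepsilon))\ge F_k(t_k)$ you correctly use.
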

\begin{proof}[Proof of Theorem \ref{the3}]
$\mathbf{i)}$ If $\sigma(f,\omega)\geq\rho\in (0,+\infty)$ a.s.,
then from \eqref{eq01} we have\

$$
(\exists B\in\mathcal{A}, P(B)=1)(\forall \omega \in B)\colon\
\varliminf\limits_{k\to+\infty}\frac{-\ln |f_{k}|}{\lambda_k(\omega)}\geq  \rho,
$$
and by definition of $\varliminf$,\
\begin{gather}\label{ineqA}
(\forall \omega \in B) (\forall \varepsilon\in(0,\rho) ) (\exists
k^*(\omega)\in\mathbb N) (\forall k\geqslant
k^*(\omega))\colon\ \lambda_{k}(\omega)<\frac 1{(-\rho+\varepsilon )}\ln
|f_k|.
\end{gather}
We denote $\displaystyle A_k:=\Big\{\omega\colon
\lambda_{k}(\omega)\geq\frac 1{(-\rho+\varepsilon )}\ln
|f_k|\Big\}. $ It is clear, that
$
B\subset
\overline{C}:=\bigcup_{N=0}^{\infty}\bigcap_{k=N}^{\infty} \overline{A}_k,
$
hence $P(\overline{C})=1$,
and $C=\bigcap_{N=0}^{\infty}\bigcup_{k=N}^{\infty} {A}_k$ is the event ``$({A}_k)$ infinitely
often'', i.e. $\overline{C}$ is the event  ``$({A}_k)$ finitely
often''.

From pairwise independence of random variables $(\lambda_k(\omega))$ follows pairwise independence of events $(A_k)$. Therefore, by refined Second Borel-Cantelli Lemma (\cite[p.84]{bil})
$$
\sum\limits_{k=0}^{+\infty}\Big(1-F_k\big({\ln
|f_k|}/{(-\rho+\varepsilon)}\big)\Big)=\sum\limits_{k=0}^{+\infty}P({A}_k)<+\infty.
$$

$\mathbf{ii)}$ If $0\geq\sigma(\omega,f)\geq\rho\in(-\infty,0]$ a.s., then
instead of \eqref{ineqA} we obtain
\begin{gather*}
(\forall \omega \in B, P(B)=1) (\forall \varepsilon >0) (\exists
k^*(\omega)\in\mathbb N) (\forall k\geq
k^*(\omega))\colon\\
\lambda_{k}(\omega)>\frac 1{(-\rho+\varepsilon )}\ln
|f_k|.
\end{gather*}
Therefore, for $\displaystyle A_k:=\Big\{\omega\colon
\lambda_{k}(\omega)\leq\frac 1{(-\rho+\varepsilon )}\ln
|f_k|\Big\} $  by the Second  Borel-Cantelli lemma we obtain again
$$
\sum\nolimits_{k=0}^{+\infty}F_k\big({\ln
|f_k|}/{(-\rho+\varepsilon)}\big)=\sum\nolimits_{k=0}^{+\infty}P(A_k)<+\infty.
$$
This completes the proof of Theorem \ref{the3}.
\end{proof}
\begin{Remark}\label{re3}\rm If $\sigma(f,\omega)>\rho\in [0,+\infty)$ a.s.,
then from \eqref{eq01} by definition of $\varliminf$ we have\
\begin{gather*}
(\forall \omega \in B) (\exists \varepsilon^*=\varepsilon^*(\omega)> 0 ) (\exists
k^*(\omega)\in\mathbb N) (\forall k\geqslant
k^*(\omega))\colon\\
 \lambda_{k}(\omega)<\frac 1{-(\rho+\varepsilon^* )}\ln
|f_k|,
\end{gather*}
and similarly as in proof of\ $\mathbf{i)}$ we obtain
$\sum_{k=0}^{+\infty}\big(1-F_k(\frac{-\ln
|f_k|}{\rho})\big)<+\infty$
in the case $\rho>0$ and in the case $\rho=0$ one has $
\sum_{k=0}^{+\infty}\big(1-F_k(+0)\big)<+\infty,
$
i.e., in particular, $\lim_{k\to +\infty}F_k(+0)=1$. Namely, if $\varliminf\limits_{k\to +\infty}F_k(+0)<1$, then $\sigma(f,\omega)\leq 0$ a.s. 
\end{Remark}
\begin{Theorem}\label{the3'}\sl Let  $\Lambda=\big(\lambda_k(\omega)\big)$ be a sequence of random variables with distribution functions
$ F_k(x):=P\{\ \omega : \lambda_{k}(\omega)<x\},\ x\in \mathbb R,\
k\geq 0, $ and $f\in\mathcal{D}(\Lambda)$.\ The following assertions hold:

\noindent $\mathbf{i)}$\ If there exist $\rho\in (0, +\infty)$ and a sequence $(\varepsilon_k)$
such that $\varepsilon_k\to +0$\ $(k\to +\infty)$ and
$\sum_{k=0}^{+\infty}\big(1-F_k(\frac{\ln |f_k|}{-\rho+\varepsilon_k})\big)<+\infty$, then\
$\sigma(f,\omega)\geq \rho$\
a.s.

\noindent $\mathbf{ii)}$\ If there exist $\rho\in (-\infty, 0]$ and a sequence $(\varepsilon_k)$
such that $\varepsilon_k\to +0$\ $(k\to +\infty)$ and
$\sum_{k=0}^{+\infty}F_k(\frac{\ln |f_k|}{-\rho+\varepsilon_k})<+\infty$, then\
$\sigma(f,\omega)\geq \rho$\
a.s.

\end{Theorem}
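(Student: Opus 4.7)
The plan is to invert the logic of Theorem~\ref{the3}: rather than deducing a tail-sum estimate from an a.s.\ abscissa bound, here we deduce the abscissa bound from a tail-sum estimate, using the First Borel--Cantelli Lemma in place of the refined Second Borel--Cantelli Lemma. Crucially, this direction does not require pairwise independence of $(\lambda_k(\omega))$, since First Borel--Cantelli needs none.

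Since the standing hypothesis $\ln k = o(\ln|f_k|)$ in force throughout Section~2 gives, via Proposition~\ref{pro-2}, that $\sigma(f,\omega) = \alpha_0(\omega) = \varliminf_{k\to+\infty} (-\ln|f_k|)/\lambda_k(\omega)$ for every $\omega$, it suffices to prove $\alpha_0(\omega) \geq \rho$ a.s.

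For part $\mathbf{i)}$, with $\rho > 0$ and $\varepsilon_k \to +0$, one has $-\rho + \varepsilon_k < 0$ for all sufficiently large $k$. I would set
\[
A_k := \Bigl\{\omega \colon \lambda_k(\omega) \geq \tfrac{\ln|f_k|}{-\rho + \varepsilon_k}\Bigr\},
\]
so that $P(A_k) = 1 - F_k\bigl(\ln|f_k|/(-\rho + \varepsilon_k)\bigr)$, and the hypothesis forces $\sum_k P(A_k) < +\infty$. The First Borel--Cantelli Lemma then yields $P(\limsup_k A_k) = 0$, so for a.e.\ $\omega$ and all sufficiently large $k$ one has $\lambda_k(\omega) < \ln|f_k|/(-\rho + \varepsilon_k)$. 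Clearing the denominator, with the inequality reversing since $-\rho + \varepsilon_k < 0$, and then rearranging yields $-\ln|f_k|/\lambda_k(\omega) > \rho - \varepsilon_k$ for large $k$; taking $\varliminf$ delivers $\alpha_0(\omega) \geq \rho$.

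Part $\mathbf{ii)}$ proceeds in parallel with the dual events $A_k := \{\omega \colon \lambda_k(\omega) < \ln|f_k|/(-\rho + \varepsilon_k)\}$, of probability exactly $F_k(\ln|f_k|/(-\rho + \varepsilon_k))$; since $\rho \leq 0$ and $\varepsilon_k > 0$, the quantity $-\rho + \varepsilon_k$ is now positive, so no sign flip occurs when clearing denominators, and the same manipulation again gives $\alpha_0(\omega) \geq \rho$. The only real bookkeeping subtlety, and the reason for the split into two parts, is tracking the sign of $-\rho + \varepsilon_k$ correctly in each regime. Since First Borel--Cantelli dispenses with independence, there is no deeper obstacle, and both parts reduce to routine algebra once this inversion of Theorem~\ref{the3} is set up.
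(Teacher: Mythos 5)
Your proposal is correct and follows essentially the same route as the paper: the same events $A_k$, the First Borel--Cantelli Lemma (with the explicit observation, also implicit in the paper's statement, that no independence is needed), the same sign analysis of $-\rho+\varepsilon_k$ in the two regimes, and the same reduction via $\sigma(f,\omega)=\alpha_0(\omega)$ to the bound $\varliminf_{k\to+\infty}(-\ln|f_k|)/\lambda_k(\omega)\geq\varliminf_{k\to+\infty}(\rho-\varepsilon_k)=\rho$.
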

\begin{proof}[Proof of Theorem \ref{the3'}]
$\mathbf{i)}$\
We denote $A_k=\big\{\omega\colon
\lambda_k(\omega)\geq\frac{\ln |f_k|}{-\rho+\varepsilon_k}\big\}$. Taking into account that
$1-F_k(\frac{\ln |f_k|}{-\rho+\varepsilon_k})=P(A_k)$, from condition
we obtain, that\break $\sum\nolimits_{k=0}^{+\infty}P(A_k)<\infty$, and by the first part of Borel-Cantelli Lemma $P(\overline C)=1$,\ $C:=\bigcap_{N=0}^{\infty}\bigcup_{k=N}^{\infty}{A}_k$.
Since,
$\overline{C}=\bigcup_{N=0}^{\infty}\bigcap_{k=N}^{\infty}\overline{A}_k$, then for all
$\omega\in \overline{C}$ there exists $k=k^*(\omega)$ such that $\omega\in
\overline{A}_k$ and $-\rho+\varepsilon_k<0$ for all $k\geq k^*(\omega)$. Here, $(\forall k\geq
k^*(\omega))\colon \lambda_k(\omega)<\frac{\ln |f_k|}{-\rho+\varepsilon_k}.$ Using $\frac{-\ln |f_k|}{\lambda_k(\omega)}>\rho-\varepsilon_k$, we get
\begin{gather}\label{fina}
\sigma(f,\omega)=\varliminf\limits_{k\to
+\infty}\frac{-\ln |f_{k}|}{\lambda_k(\omega)}
\geq \varliminf_{k\to
+\infty}(\rho -\varepsilon_k)=\rho\quad \text{a.s.}
\end{gather}

\noindent $\mathbf{ii)}$ We denote $\displaystyle A_k:=\Big\{\omega\colon
\lambda_{k}(\omega)<\frac{\ln |f_k|}{-\rho+\varepsilon_k}\Big\}.$ By the condition $\sum_{k=0}^{+\infty}P(A_k)<+\infty$. Since, by the first part of Borel-Cantelli lemma $P(\overline C)=1$,\break $C:=\bigcap_{N=0}^{\infty}\bigcup_{k=N}^{\infty}{A}_k$.
Where, as above for every
$\omega\in\overline{C}=\bigcup_{N=0}^{\infty}\bigcap_{k=N}^{\infty}\overline{A}_k$ there exists $k=k^*(\omega)$ such that $\omega\in
\overline{A}_k$ and $-\rho+\varepsilon_k>0$ for all $k\geq k^*(\omega)$, such hat, $(\forall k\geq
k^*(\omega))\colon \lambda_k(\omega)\geq\frac{\ln |f_k|}{-\rho+\varepsilon_k}.$ Hence, $\frac{-\ln |f_k|}{\lambda_k(\omega)}>\rho-\varepsilon_k$ and, therefore, we have again the ``chain'' of relations \eqref{fina}.

The proof of Theorem \ref{the3'} is complete.
\end{proof}

\medskip\noindent{\bf 2.\ Some corollaries.}

\begin{Corollary}\label{cor1}\sl Let $f\in\mathcal{D}(\Lambda)$ and $\Lambda=(\lambda_k(\omega))$ be a sequence of pairwise independent random variables with distribution functions
$ F_k(x),\
k\geq 0. $ If $\varliminf\limits_{k\to +\infty}F_k(+0)<1$ and $ f_k\to 0$\ $(k\to +\infty)$, then
$\sigma(f,\omega)= 0$ a.s.
\end{Corollary}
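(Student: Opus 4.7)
The strategy is to split the assertion into the two inequalities $\sigma(f,\omega)\le 0$ a.s.\ and $\sigma(f,\omega)\ge 0$ a.s., and harvest each from a result already established above. The upper bound comes from Remark~\ref{re3}, the lower bound from Theorem~\ref{the3'}~$\mathbf{ii)}$, so the only actual work is to verify their hypotheses in the present setting.

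For the upper bound I would invoke the last assertion of Remark~\ref{re3} directly: its hypothesis is exactly $\varliminf_{k\to+\infty}F_k(+0)<1$, and its conclusion is $\sigma(f,\omega)\le 0$ a.s. The pairwise independence of $(\lambda_k(\omega))$ required by the refined second Borel--Cantelli lemma that drives Theorem~\ref{the3}~$\mathbf{i)}$, and hence this part of Remark~\ref{re3}, is part of the corollary's hypotheses, so nothing further is needed here.

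For the lower bound I would apply Theorem~\ref{the3'}~$\mathbf{ii)}$ with $\rho=0$, which asks for a sequence $\varepsilon_k\to+0$ with $\sum_{k=0}^{+\infty} F_k(\ln|f_k|/\varepsilon_k)<+\infty$. The key observation is that each $\lambda_k(\omega)$ is a positive random variable, so $F_k(x)=P\{\lambda_k<x\}=0$ for every $x\le 0$. Since $f_k\to 0$, there is a $k_0$ with $|f_k|<1$, hence $\ln|f_k|<0$, for all $k\ge k_0$; consequently $\ln|f_k|/\varepsilon_k<0$ for such $k$ and for any positive $\varepsilon_k$. Choosing $\varepsilon_k=1/k$ (any sequence tending to $0$ through positive values will do) therefore makes $F_k(\ln|f_k|/\varepsilon_k)=0$ for $k\ge k_0$, so the series has only finitely many nonzero terms and converges trivially. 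Theorem~\ref{the3'}~$\mathbf{ii)}$ then yields $\sigma(f,\omega)\ge 0$ a.s., and combining with the upper bound gives the claim.

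There is no substantive obstacle: once one notices that positivity of the exponents annihilates the left tails of $F_k$, the integrability condition in Theorem~\ref{the3'}~$\mathbf{ii)}$ becomes vacuous, and the matching upper bound is already packaged in Remark~\ref{re3}. Should one wish to dispense with strict positivity of $\lambda_k$, the only non-quotational step is to use $F_k(x)\to 0$ as $x\to-\infty$ to select $M_k>0$ with $F_k(-M_k)<2^{-k}$ and put $\varepsilon_k=\min\{1/k,\,-\ln|f_k|/M_k\}$; this still forces $F_k(\ln|f_k|/\varepsilon_k)\le 2^{-k}$ for all large $k$, hence summability by comparison with $\sum 2^{-k}$.
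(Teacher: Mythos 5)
Your proposal is correct and follows essentially the same route as the paper's own proof: the upper bound $\sigma(f,\omega)\le 0$ a.s.\ is quoted from Remark~\ref{re3}, and the lower bound is obtained from Theorem~\ref{the3'}~$\mathbf{ii)}$ with $\rho=0$ by observing that positivity of the exponents forces $F_k(\ln|f_k|/\varepsilon_k)=0$ for all large $k$, making the summability condition trivial. The extra paragraph on relaxing positivity of $\lambda_k$ is a harmless addition not present in (and not needed for) the paper's argument, since the exponents are assumed positive from the outset.
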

\begin{proof}[Proof of Corollary \ref{cor1}] By Remark \ref{re3},\ $\sigma(f,\omega)\leq 0$ a.s. It is remains to prove that $\sigma(f,\omega)\geq 0$ a.s. Indeed, $\lambda_k(\omega)\geq 0$, therefore $F_k(0)=P\{\omega\colon \lambda_k(\omega)<0\}=0$. Hence, $\sum_{k=k_0}^{+\infty}F_k(\frac{\ln |f_k|}{\varepsilon_k})<+\infty$ because $\frac{\ln |f_k|}{\varepsilon_k}<0$\ $(k\geq k_0)$ and by Theorem \ref{the3'} $\mathbf{ii)}$, $\sigma(f,\omega)\geq 0$ a.s.
\end{proof}

\begin{Corollary}\label{cor2}\sl  Let $f\in\mathcal{D}(\Lambda)$ and $\Lambda=(\lambda_k(\omega))$ be a sequence of pairwise independent random variables with distribution functions
$ F_k(x),\
k\geq 0. $ If there exists a positive random variable
$a(\omega)$ such that $(\forall x\geq 0)(\forall
k\in\mathbb{Z}_{+})\colon$ $F_k(x)\leq F_{a}(x):=P\{\omega\colon
a(\omega)<x \}$ and $F_{a}(+0)< 1$ and $ f_k\to 0$\ $(k\to +\infty)$, then $\sigma(f,\omega)= 0$ a.s.
\end{Corollary}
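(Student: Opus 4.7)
The plan is to reduce Corollary \ref{cor2} to Corollary \ref{cor1} by showing that the domination hypothesis $F_k(x)\leq F_a(x)$ for $x\geq 0$, together with $F_a(+0)<1$, forces $\varliminf_{k\to\infty} F_k(+0)<1$. Indeed, distribution functions are monotone, so passing to the limit $x\searrow 0$ in $F_k(x)\leq F_a(x)$ yields $F_k(+0)\leq F_a(+0)<1$ uniformly in $k$, hence $\varliminf_{k\to\infty} F_k(+0)\leq F_a(+0)<1$. Combined with the assumption $f_k\to 0$, this is exactly the hypothesis of Corollary \ref{cor1}, so the conclusion $\sigma(f,\omega)=0$ a.s.\ follows at once.

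If one prefers not to invoke Corollary \ref{cor1} directly, the two-sided bound can be reproved step by step. For the upper bound $\sigma(f,\omega)\leq 0$ a.s.\ I would apply Remark \ref{re3}, which requires precisely $\varliminf_{k\to\infty} F_k(+0)<1$, a condition already verified above. For the lower bound $\sigma(f,\omega)\geq 0$ a.s.\ I would apply Theorem \ref{the3'} \textbf{ii)} with $\rho=0$ and any sequence $\varepsilon_k\to +0$: positivity of the random exponents $\lambda_k(\omega)$ gives $F_k(0)=0$, while $f_k\to 0$ forces $\ln|f_k|<0$ and hence $\ln|f_k|/\varepsilon_k<0$ for all sufficiently large $k$, so $F_k(\ln|f_k|/\varepsilon_k)=0$ eventually and the series in the hypothesis of Theorem \ref{the3'} \textbf{ii)} converges trivially.

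I do not anticipate any substantive obstacle; the only delicate point is the passage from the pointwise domination on $(0,+\infty)$ to the boundary value $F_k(+0)$, which is immediate from monotonicity. The role of the random majorant $a(\omega)$ is simply to produce the uniform-in-$k$ bound $F_k(+0)\leq F_a(+0)<1$ required to invoke Remark \ref{re3}.
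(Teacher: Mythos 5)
Your proposal is correct and matches the paper's own argument, which likewise deduces Corollary \ref{cor2} immediately from Corollary \ref{cor1} via the uniform bound $F_k(+0)\leq F_a(+0)<1$. The additional direct verification through Remark \ref{re3} and Theorem \ref{the3'} \textbf{ii)} is sound but unnecessary.
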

{\it The statement of Corollary \ref{cor2}} follows immediately from Corollary \ref{cor1}.

\begin{Corollary}\label{cor3}\sl Let $f\in\mathcal{D}(\Lambda)$ and $\Lambda=(\lambda_k(\omega))$ be a sequence of random variables with distribution functions
$ F_k(x),\
k\geq 0. $
If $ f_k\to 0$\ $(k\to +\infty)$ and there exist a positive random variable $b(\omega)$  and  $\rho>0$ such that
$(\forall x\geq 0)(\forall k\in\mathbb{Z}_{+})\colon$ $F_k(x)\geq
F_{b}(x):=P\{\omega\colon b(\omega)<x \}$,
$\int_{0}^{+\infty}n_{\mu}(t\rho)\ dF_{b}(t)< +\infty$, where $n_{\mu}(t)=\sum\nolimits_{\mu_k\leq t}1$ is the counting function of a sequence $\mu_k=-\ln|f_k|$, then
$\sigma(f,\omega)\geq \rho$ a.s.
\end{Corollary}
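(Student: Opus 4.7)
The plan is to apply Theorem \ref{the3'}\,$\mathbf{i)}$, which requires producing a sequence $\varepsilon_k\to +0$ with $0<\varepsilon_k<\rho$ such that $\sum_{k=0}^{+\infty}\bigl(1-F_k(\tfrac{\ln |f_k|}{-\rho+\varepsilon_k})\bigr)<+\infty$. Since $f_k\to 0$, the sequence $\mu_k=-\ln|f_k|$ tends to $+\infty$, and in particular $\mu_k>0$ for all sufficiently large $k$. For such $k$ and any $\varepsilon_k\in(0,\rho)$ the argument $\tfrac{\ln|f_k|}{-\rho+\varepsilon_k}=\tfrac{\mu_k}{\rho-\varepsilon_k}$ is positive, so the stochastic-domination hypothesis $F_k(x)\ge F_b(x)$ ($x\ge 0$) together with the monotonicity of $F_b$ yields
\[
1-F_k\!\left(\tfrac{\mu_k}{\rho-\varepsilon_k}\right)\le 1-F_b\!\left(\tfrac{\mu_k}{\rho-\varepsilon_k}\right)\le 1-F_b\!\left(\tfrac{\mu_k}{\rho}\right).
\]

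The heart of the argument is to recognize this majorizing series as (essentially) the integral in the hypothesis. Writing $n_\mu(t\rho)=\sum_{k}\mathbf{1}\{\mu_k\le t\rho\}=\sum_{k}\mathbf{1}\{t\ge\mu_k/\rho\}$ and applying Fubini's theorem to the nonnegative integrand, I obtain
\[
\int_0^{+\infty}n_\mu(t\rho)\,dF_b(t)=\sum_{k=0}^{+\infty}\int_0^{+\infty}\mathbf{1}\{t\ge\mu_k/\rho\}\,dF_b(t).
\]
Because $b(\omega)>0$ a.s.\ forces $F_b(0)=0$, each inner integral equals $1-F_b(\mu_k/\rho)$ when $\mu_k>0$ and equals $1$ when $\mu_k\le 0$; since $\mu_k\to+\infty$, the latter case occurs only for finitely many $k$, so the finiteness of the integral is equivalent to $\sum_{k\,:\,\mu_k>0}(1-F_b(\mu_k/\rho))<+\infty$.

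Picking any $\varepsilon_k\to+0$ with $\varepsilon_k<\rho$ (for concreteness $\varepsilon_k=\rho/(k+2)$) and combining the two displayed bounds, the series $\sum_{k=0}^{+\infty}(1-F_k(\tfrac{\ln|f_k|}{-\rho+\varepsilon_k}))$ is dominated, up to a finite correction, by the convergent series $\sum_{k}(1-F_b(\mu_k/\rho))$. Theorem \ref{the3'}\,$\mathbf{i)}$ then delivers $\sigma(f,\omega)\ge\rho$ a.s.

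The only mildly delicate step is the Fubini computation together with the left-continuous convention $F_b(x)=P\{b(\omega)<x\}$ at the boundary, and the bookkeeping needed to absorb the finitely many indices with $\mu_k\le 0$ into a harmless finite summand. Notice that no independence assumption on the $\lambda_k$ is required here, because Theorem \ref{the3'}\,$\mathbf{i)}$ uses only the direct half of the Borel--Cantelli lemma, which is why this corollary stands on a genuinely weaker footing than its companion Corollaries \ref{cor1}--\ref{cor2}.
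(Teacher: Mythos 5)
Your proof is correct and follows essentially the same route as the paper: dominate $1-F_k$ by $1-F_b$ using the stochastic-ordering hypothesis on $[0,+\infty)$, identify the hypothesis integral with the tail sum $\sum_k\bigl(1-F_b(\mu_k/\rho)\bigr)$ (absorbing the finitely many $k$ with $\mu_k\le 0$), and invoke Theorem~\ref{the3'}. The only differences are cosmetic: you establish the sum--integral identity by Tonelli/Fubini where the paper uses Stieltjes integration by parts and a change of variables, and you correctly cite part $\mathbf{i)}$ of Theorem~\ref{the3'}, whereas the paper's proof cites $\mathbf{ii)}$ --- an apparent typo, since here $\rho>0$ and the series involves $1-F_k$.
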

\begin{proof}[Proof of Corollary \ref{cor3}] We remark that
\begin{gather*}
\sum_{k=k_0}^{n}\Big(1-F_k(\frac{\ln |f_k|}{-\rho+\varepsilon_k})\Big)\leq\sum_{k=k_0}^{n}\Big(1-F_k(\frac{-\ln |f_k|}{\rho})\Big)\leq\\
\leq\int\limits_{\mu_{k_0}}^{\mu_{n}}\Big(1-F_k\big(t/\rho\big)\Big)dn_{\mu}(t)
\leq\int\limits_{\mu_{k_0}}^{\mu_{n}}(1-F_b(t/\rho))dn_{\mu}(t)+O(1)=\\
=\int\limits_{\mu_{k_0}}^{\mu_n}n_{\mu}(t)\ dF_{b}(t/\rho)+O(1)=\int\limits_{{\mu_{k_0}}/{\rho}}^{{\mu_n}/{\rho}}n_{\mu}(t\rho)\ dF_{b}(t)+O(1)
\end{gather*}
as $n\to +\infty$, because $-\ln |f_k|>0$ for $k\geq k_0$ and $\rho-\varepsilon_k<\rho$ for all $k\geq 0$.
Thus $\sum_{k=k_0}^{+\infty}\Big(1-F_k(\frac{\ln |f_k|}{-\rho+\varepsilon_k})\Big)<+\infty$.
Hence by  Theorem \ref{the3'} $\mathbf{ii)}$ we complete the proof.
\end{proof}
\begin{Corollary}\label{cor4}\sl
Let  $\Lambda=(\lambda_k(\omega))$ be a increasing (a.s.) sequence of pairwise independent random variables and $f\in\mathcal{D}(\Lambda)$. If $F_0(+0)<1$, where $F_0$ is distribution function of $\lambda_0(\omega)$, and $f_k\to 0$ $(k\to +\infty)$, then $\sigma(f,\omega)= 0$ a.s.
\end{Corollary}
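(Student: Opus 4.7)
The plan is to reduce Corollary \ref{cor4} directly to Corollary \ref{cor1}. Corollary \ref{cor1} says that for pairwise independent $(\lambda_k(\omega))$ with $f_k\to 0$ and $\varliminf_{k\to+\infty} F_k(+0)<1$, one has $\sigma(f,\omega)=0$ a.s. Two of these three hypotheses are already given in Corollary \ref{cor4} (pairwise independence and $f_k\to 0$), so the entire proof reduces to verifying the pointwise estimate $\varliminf_k F_k(+0) < 1$ from the new hypothesis $F_0(+0)<1$ together with the monotonicity of $\Lambda$.

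To get this, I would use that ``$\Lambda$ is increasing a.s.'' gives $\lambda_k(\omega) \geq \lambda_0(\omega)$ a.s.\ for each $k \geq 0$. Consequently, for every $x \in \mathbb{R}$ one has the event inclusion (up to a null set) $\{\omega: \lambda_k(\omega) < x\} \subseteq \{\omega: \lambda_0(\omega) < x\}$, which immediately yields $F_k(x) \leq F_0(x)$ for every $x$ and every $k$. Passing to the right-limit $x \to +0$ one obtains $F_k(+0) \leq F_0(+0) < 1$ for all $k$, and therefore $\varliminf_k F_k(+0) \leq F_0(+0) < 1$.

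With this inequality in hand, Corollary \ref{cor1} applies verbatim to yield $\sigma(f,\omega)=0$ a.s. There is essentially no hard step: the whole content of Corollary \ref{cor4} is the pleasant observation that for an increasing sequence the distribution functions are dominated at the origin by $F_0$, so no additional probabilistic work (Borel--Cantelli, independence estimates, or direct use of Theorem \ref{the3}/Theorem \ref{the3'}) is required beyond packaging what has already been established.
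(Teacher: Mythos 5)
Your proposal is correct and follows essentially the same route as the paper: the key observation in both is that monotonicity of $(\lambda_k(\omega))$ forces $F_k(x)\leq F_0(x)$ for all $x$, hence $F_k(+0)\leq F_0(+0)<1$. The paper routes this through Corollary \ref{cor2} (with $a(\omega)=\lambda_0(\omega)$) while you invoke Corollary \ref{cor1} directly, but since Corollary \ref{cor2} is itself an immediate consequence of Corollary \ref{cor1}, the arguments are the same in substance.
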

\begin{proof}[Proof of Corollary \ref{cor4}] We remark that $F_{k+1}(x)=P\{\omega\colon \lambda_{k+1}(\omega)<x\}\leq$\break $\leq P\{\omega\colon \lambda_{k}(\omega)<x\}= F_{k}(x)\leq    \ldots\leq F_{0}(x)$, because $\lambda_k(\omega)\leq \lambda_{k+1}(\omega)$ $(k\geq 0)$ a.s. Therefore, by Corollary \ref{cor2} we obtain the conclusion of Corollary \ref{cor4}.
\end{proof}

\vskip15pt

\bigskip

\end{document}